\definecolor{dg}{rgb}{0,0.5,0}		
\definecolor{orange}{rgb}{1.0, 0.5, 0.0}
\theoremstyle{theorem}
\newtheorem{Thm}{Theorem}
\newtheorem{Cor}[Thm]{Corollary}
\newtheorem{Prop}[Thm]{Proposition}
\theoremstyle{definition}
\newtheorem{Rem}[Thm]{Remark}
\newcommand{\sZ}{{\mathbb Z}}
\newcommand{\sR}{{\mathbb R}}
\newcommand{\sC}{{\mathbb C}}
\renewcommand{\L}{\mathcal{L}}
\newcommand{\itSigma}{\mathit{\Sigma}}
\newcommand{\ii}{\textup{i}}
\newcommand{\ee}{\textup{e}}
\DeclareMathOperator{\diag}{diag}
\DeclareMathOperator{\tr}{tr}
\DeclareMathOperator{\sgn}{sgn}
\DeclareMathOperator{\Arg}{Arg}
\DeclareMathOperator{\RE}{Re}
\DeclareMathOperator{\IM}{Im}
\DeclareMathOperator{\Pid}{\Pi_{\textup{d}}}
\newcommand{\vc}{\vcentcolon =}
\newcommand{\cv}{= \vcentcolon}
\renewcommand{\bar}{\overline}		
\begin{document}

\title{On the diagonal product of special unitary matrices}
\author{Tomasz Miller\thanks{Email: tomasz.miller@uj.edu.pl}\\ \normalsize Copernicus Center for Interdisciplinary Studies, Jagiellonian University \\ 
\normalsize Szczepa\'{n}ska 1/5, 31-011 Cracow, Poland }

\date{\today}

\maketitle

\begin{abstract}
We study the image of $\textup{SU}(n)$ under the diagonal product map. Using only elementary tools of linear algebra, analysis and general topology, we find the analytical formula for the boundary of this image and find all special unitary matrices whose diagonal product lies on that boundary.
\end{abstract}

Keywords: special unitary matrices, diagonal product, constrained optimization

MSC (2020): 15A15, 15B10, 15B57, 15B30


\section{Introduction and main result}

The \emph{diagonal product} of an $n$-by-$n$ matrix $A$ is defined simply as the product of all its diagonal entries\footnote{Although some authors \cite{sinkhorn69,engel73} understand this term broadlier as $\Pi_{\sigma}(A) = \prod\nolimits_j A_{j\sigma_j} = \Pid(P_\sigma A)$ for any given permutation $\sigma \in S_n$, where $P_\sigma$ is its corresponding permutation matrix.} $\Pid(A) := \prod\nolimits_{j=1}^n A_{jj}$. Although not as ubiquitous as its `additive cousin' -- the trace -- the diagonal product does appear throughtout matrix theory, most notably in the classic Hadamard's inequality $\det H \leq \Pid(H)$ valid for any positive-semidefinite $H$ (as well as in some of its generalizations \cite{rozanski17}), or even in the very definitions of the determinant $\det A = \sum\nolimits_{\sigma \in S_n} \sgn\sigma \Pid(P_\sigma A)$, the permanent and other immanants. The diagonal product appears also in some natural generalizations of the notion of numerical range (cf. \cite{miranda92, bebiano93} and references therein).

In \cite{kaiser06} it was shown that $\tr(\textup{SU}(n))$, i.e. the image of $\textup{SU}(n)$ under the trace map, is quite nontrivial for $n \geq 3$, being a compact region enclosed by the $n$-cusped hypocycloid. In this note we ask similar questions for the diagonal product map: What is the image $\Pid(\textup{SU}(n))$? Can its boundary be described analitycally? What are the matrices whose diagonal product lies on that boundary? Let us observe that, unlike in the trace case, these questions do not translate directly to relatively simple algebraical relations between the matrices' eigenvalues.

Before turning to special unitary matrices, let us observe that the image of the set $\textup{U}(n)$ of all unitary $n$-by-$n$ matrices under the map $\Pid$ is not very interesting. For $n=1$ it is trivially the unit circle, whereas for $n > 1$ we have the following result.
\begin{Prop}
For any fixed $n \geq 2$, the image $\Pid(\textup{U}(n))$ is the closed unit disk. Moreover, the diagonal product of $U\in\textup{U}(n)$ lies on the unit circle iff $U$ is diagonal. In other words,
\begin{align}
\label{premain}
|\Pid(U)| = 1 \quad \Longleftrightarrow \quad U = \diag(\ee^{\ii \alpha_1}, \ldots, \ee^{\ii \alpha_n}).
\end{align} 
\end{Prop}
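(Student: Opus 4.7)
\bigskip
\noindent\textbf{Proof plan.}

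The first step is to observe that every diagonal entry $U_{jj}$ of a unitary matrix satisfies $|U_{jj}| \leq 1$, because the $j$-th column of $U$ is a unit vector in $\mathbb{C}^n$. Consequently
\[
|\Pid(U)| \eq \prod_{j=1}^n |U_{jj}| \leq 1,
\]
which gives the inclusion $\Pid(\textup{U}(n)) \subseteq \overline{\mathbb{D}}$. The equivalence \eqref{premain} falls out of the same inequality: if $|\Pid(U)| = 1$ then each factor $|U_{jj}|$ must equal $1$, and then the unit-norm condition on the $j$-th column forces $U_{ij} = 0$ for every $i \neq j$, so $U$ is diagonal. The converse implication in \eqref{premain} is obvious.

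It remains to show surjectivity onto $\overline{\mathbb{D}}$. I would reduce to the case $n=2$ by embedding: given any $U_0 \in \textup{U}(2)$, the block matrix $U_0 \oplus I_{n-2}$ lies in $\textup{U}(n)$ and has the same diagonal product as $U_0$. Thus it suffices to realise every point of $\overline{\mathbb{D}}$ as the diagonal product of a $2$-by-$2$ unitary. For any $z = r \ee^{\ii\theta}$ with $0 \leq r \leq 1$, I would write down an explicit example such as
\[
U_0 \eq \begin{pmatrix} r\ee^{\ii\theta} & \sqrt{1-r^2} \\ -\sqrt{1-r^2} & r \end{pmatrix},
\]
and verify directly that $U_0$ is unitary and that $\Pid(U_0) = r \ee^{\ii\theta} \cdot r = r^2 \ee^{\ii\theta}$; rescaling the moduli (replacing $r$ by $\sqrt{r}$) then hits $z$ exactly.

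None of the steps is really an obstacle: the hard-looking direction (surjectivity) collapses once one notices that embedding via $I_{n-2}$ reduces everything to a single two-dimensional construction, and the rigidity statement \eqref{premain} is an equality case of the trivial column-norm inequality. The only thing that requires a bit of care is choosing the $2$-by-$2$ example so that its diagonal product has both the correct modulus \emph{and} the correct argument simultaneously, which is why I would prefer an explicit formula over an abstract existence argument.
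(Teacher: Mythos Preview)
Your approach is essentially identical to the paper's: bound $|\Pid(U)|$ by the column-norm inequality, read off the equality case, and realise every $z\in\overline{\mathbb D}$ via an explicit $2\times 2$ unitary padded by $I_{n-2}$.

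There is one slip in the surjectivity step. Your proposed matrix
\[
U_0 \eq \begin{pmatrix} r\ee^{\ii\theta} & \sqrt{1-r^2} \\ -\sqrt{1-r^2} & r \end{pmatrix}
\]
is \emph{not} unitary in general: the Hermitian inner product of its two columns is $r\sqrt{1-r^2}\,(\ee^{-\ii\theta}-1)$, which vanishes only when $\theta=0$ or $r\in\{0,1\}$. The fix is to attach the phase to an entire column rather than to a single entry, e.g.
\[
U_0 \eq \begin{pmatrix} \sqrt{r} & -\sqrt{1-r} \\ \sqrt{1-r} & \sqrt{r} \end{pmatrix}\diag(\ee^{\ii\theta},1),
\]
which is genuinely unitary and has $\Pid(U_0)=r\ee^{\ii\theta}$. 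This is exactly the construction the paper uses (written there as $U_z$ with $r=|z|$, $\ee^{\ii\theta}=\sgn z$). Once this is corrected, your argument and the paper's coincide.
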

\begin{proof}
For any $U \in \textup{U}(n)$, by definition one has that $\sum_k |U_{jk}|^2 = 1$ for any $j$, and therefore $|U_{jj}| \leq 1$. This allows us to write $|\Pid(U)| = \prod\nolimits_j |U_{jj}| \leq 1$. Of course, every diagonal $U$ saturates the latter bound.

Suppose now that $|\Pid(U)| = 1$, but $U \in \textup{U}(n)$ is \emph{not} diagonal, i.e., $U_{jk} \neq 0$ for some $j \neq k$. But since $\sum_j |U_{jk}|^2 = 1 = \sum_k |U_{jk}|^2$, this would imply that $|U_{jj}|, |U_{kk}| < 1$ and we would get $|\Pid(U)| = \prod\nolimits_l |U_{ll}| < 1$, contradicting the assumption on $U$.

Finally, in order to show that every $z$ from the closed unit disk is a diagonal product of some $U \in \textup{U}(n)$, define the following $2$-by-$2$ unitary matrix
\begin{align*}
U_z \vc \left[ \begin{array}{cc} \sqrt{|z|}  & -\sqrt{1 - |z|} \\ \sqrt{1 - |z|} & \sqrt{|z|} \end{array} \right] \diag(\sgn z, 1),
\end{align*}
where $\sgn z$ is the complex signum function\footnote{It is defined as $\sgn z \vc z/|z|$ for $z \neq 0$ and $\sgn 0 \vc 0$.}. Then, immediately, $\Pid(U_z \oplus I_{n-2}) = z$.
\end{proof}

For $\textup{SU}(n)$ the situation gets much more interesting. Concretely, the main result of this note is the following.
\begin{Thm}
\label{main}
For any fixed $n \geq 1$, the image $\Pid(\textup{SU}(n))$ is the compact region of the complex plane enclosed by the (image of the) closed curve
\begin{align}
\label{main1}
\gamma: [-\pi,\pi] \rightarrow \sC, \qquad \gamma(\alpha) = \ee^{\ii \alpha} \left( 1 - \left( 1 - \ee^{-\ii \alpha} \right) \tfrac{1}{n} \right)^n.
\end{align}
Moreover, the diagonal product of $U \in \textup{SU}(n)$ is equal to $\gamma(\alpha)$ for some $\alpha \in [-\pi,\pi]$ iff $U$ is of the form
\begin{align}
\label{main2}
U = \left( I - (1 - \ee^{-\ii \alpha}) v v^\dag \right) \diag(\ee^{\ii \alpha_1}, \ldots, \ee^{\ii \alpha_n}),
\end{align}
with $v \in \sC^n$ satisfying $|v_k| = \tfrac{1}{\sqrt{n}}$ for all $k$, and $\ee^{\ii \sum_k \alpha_k} = \ee^{\ii \alpha}$.
\end{Thm}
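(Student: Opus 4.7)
For the ``$\Leftarrow$'' direction, given $U$ of the form (\ref{main2}), the normalisation $|v_k|^2 = \tfrac{1}{n}$ gives $\norm v = 1$; hence $V := I - (1 - \ee^{-\ii\alpha})vv^\dag$ is a rank-$1$ unitary perturbation of $I$ with spectrum $\{\ee^{-\ii\alpha}, 1, \ldots, 1\}$ and determinant $\ee^{-\ii\alpha}$, so $U = V\diag(\ee^{\ii\alpha_j}) \in \textup{SU}(n)$. All diagonal entries of $V$ coincide with $\kappa := 1 - (1 - \ee^{-\ii\alpha})/n$, yielding $U_{jj} = \kappa\ee^{\ii\alpha_j}$ and therefore $\Pid(U) = \kappa^n\ee^{\ii\alpha} = \gamma(\alpha)$.

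For the converse and the shape of the image, I would observe that $\Pid(\textup{SU}(n)) \subset \CC$ is compact and path-connected, so its topological boundary consists of critical values of $\Pid$: images of $U_0$ at which the real differential $d\Pid|_{U_0}: T_{U_0}\textup{SU}(n) \to \CC$ has real rank at most $1$. Parametrising tangent vectors as $\ii U_0 H$ with $H$ Hermitian and traceless, a direct calculation yields
\begin{equation*}
d\Pid|_{U_0}(\ii U_0 H) \eq \ii\,\Pid(U_0)\,\tr\bigl(D^{-1}U_0 \cdot H\bigr),\qquad D := \diag((U_0)_{11}, \ldots, (U_0)_{nn}),
\end{equation*}
under the harmless assumption $(U_0)_{jj} \neq 0$ for all $j$ (if $\Pid(U_0)=0$, the derivative vanishes and $0$ lies in the interior of the image for $n\geq 3$). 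Splitting $D^{-1}U_0$ into its Hermitian and $\ii\cdot$(Hermitian) parts and demanding that these be proportional modulo $I$ translates the rank-$\leq 1$ condition into the structural identity $D^{-1}U_0 = \mu I + \nu K$ with $\mu,\nu \in \CC$ and $K$ Hermitian. Enforcing $U_0 U_0^\dag = I$ then produces the matrix equation $|\mu|^2 I + 2\RE(\mu\bar\nu)K + |\nu|^2 K^2 = \diag(|(U_0)_{jj}|^{-2})$, and completing the square shows that $L := |\nu|K + (\RE(\mu\bar\nu)/|\nu|)I$ has a diagonal square. Since $L$ commutes with $L^2$, it must be block-diagonal with respect to the eigenspaces of that diagonal matrix; the genuine boundary extremals are those for which $L^2 \propto I$, in which case $L = \sqrt{\mu_0}(2P - I)$ for an orthogonal projection $P$, giving $U_0 = c\,[(I - P) + \ee^{\ii\theta}P]\,D'$ for some unimodular $c$, real phase $\theta$, and diagonal unitary $D'$.

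To finish the identification with (\ref{main2}), I would first note that $L^2 \propto I$ forces all $|(U_0)_{jj}|$ equal and hence $P_{jj}(1-P_{jj})$ constant in $j$; a rank-by-rank comparison of $|\Pid|$ based on the elementary identity $|n - k + k\ee^{\ii\theta}|^2 = n^2 - 4k(n-k)\sin^2(\theta/2)$ then shows that only rank-$1$ projections produce outermost boundary values, the higher-rank cases landing strictly inside. In the rank-$1$ case $P = vv^\dag$, the constraint $\sum_j P_{jj} = 1$ with $P_{jj}$ constant forces $|v_k|^2 = 1/n$; absorbing $c$ into $D'$ (with phases adjusted so that $\sum_j \arg D'_{jj} \equiv \alpha \pmod{2\pi}$ and $\theta = -\alpha$) recovers (\ref{main2}) verbatim. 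A final topological argument, exploiting that $\gamma^{1/n}(\alpha) = \tfrac{1}{n}[(n-1)\ee^{\ii\alpha/n} + \ee^{-\ii(n-1)\alpha/n}]$ is (up to scale) the $n$-cusped hypocycloid of \cite{kaiser06} and therefore bounds a well-defined compact region, identifies $\Pid(\textup{SU}(n))$ with the closed region enclosed by $\gamma$. The principal obstacle I anticipate is precisely the rank reduction together with the uniformity $|v_k|^2 = 1/n$: one must carefully track how $|\Pid(U_0)|$ varies both with the rank and with the diagonal profile of $P$ in order to rule out non-rank-$1$ projections as genuine boundary extremals rather than merely interior critical values.
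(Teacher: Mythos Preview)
Your ``$\Leftarrow$'' verification is correct, and your first-order critical-point computation is a clean alternative to the paper's route: the identity $d\Pid|_{U_0}(\ii U_0 H)=\ii\,\Pid(U_0)\,\tr(D^{-1}U_0 H)$ and the translation of the real-rank-$\leq 1$ condition into $D^{-1}U_0=\mu I+\nu K$ with Hermitian $K$ are both valid, and the completing-the-square step giving $L^2$ diagonal is correct as well. The paper arrives at the equivalent structural constraint (its formula $c_{kj}=\ee^{-2\ii\varphi}\bar c_{jk}$, i.e.\ $\ee^{\ii\varphi}U_1$ Hermitian up to a scalar) by varying $U_0$ along specific one- and two-parameter subgroups and applying Lagrange multipliers, so at the level of first-order information the two approaches agree.

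The genuine gap is the one you flag yourself but do not close: the assertion ``the genuine boundary extremals are those for which $L^2\propto I$'' is exactly the step that cannot be extracted from first-order data. When $L^2$ has several eigenvalues you correctly observe that $U_0$ becomes block-diagonal, but nothing so far rules out such a block-diagonal critical point sitting on the outer boundary. In the paper this is the entire content of Step~3: a \emph{second-order} test (the sign of the bordered Hessian determinant for the two-parameter perturbation $\ee^{sX+tY}U_0\ee^{-sX-tY}$) shows that if two diagonal entries differ in modulus then the putative maximizer is in fact a constrained \emph{minimum}, hence not on the outer boundary. Your outline contains no analogue of this second-order argument. Moreover, your ``rank-by-rank comparison'' presupposes the constant profile $P_{jj}=k/n$; the passage from ``$P_{jj}(1-P_{jj})$ constant'' (which is what $L^2\propto I$ actually gives) to ``$P_{jj}$ constant'' is not automatic for $\textup{rank}\,P>1$, since $P_{jj}\in\{p,1-p\}$ is a priori allowed.

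Two smaller points. First, the cases $n=1,2$ are degenerate (the image is a point or a segment, so the ``boundary $\subset$ critical values'' heuristic needs separate handling); the paper treats them directly in Step~1. Second, your hypocycloid observation $\gamma(\alpha)^{1/n}=\tfrac1n[(n-1)\ee^{\ii\alpha/n}+\ee^{-\ii(n-1)\alpha/n}]$ is a nice way to see that $\gamma$ is a simple closed curve, but it does not by itself show that the image \emph{fills} the enclosed region. The paper closes this with an explicit homotopy (Step~8): a one-parameter family $A(\alpha,\omega)\in\textup{SU}(n)$ whose diagonal products contract $\gamma$ to the point $1$ inside $\Pid(\textup{SU}(n))$, so that any missed interior point would contradict the nontrivial winding of $\gamma$. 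Your argument would need either this or the open-and-closed argument in the complement of $\gamma$ to conclude.
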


\begin{figure}[h]
\centering
  \includegraphics[width=.95\linewidth]{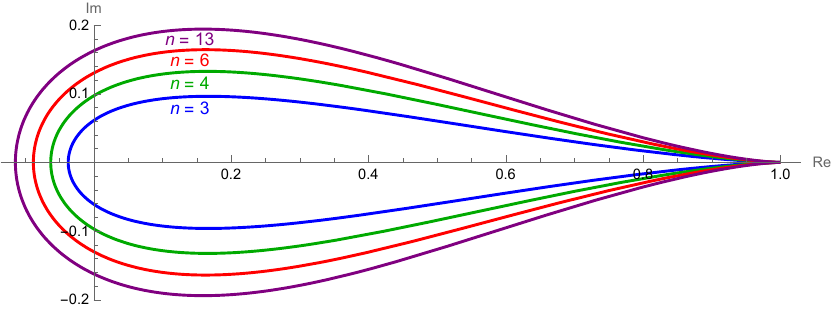}
\caption{\label{fig4}}Graphs of $\gamma$ for several values of $n$.
\end{figure}

\begin{Rem}
\label{rem2}
For $n > 1$ the curve $\gamma$ is regular everywhere except at $\gamma(0) = 1$. Indeed, one has
\begin{align*}
\partial_\alpha \gamma = \ii \left(1-\tfrac{1}{n}\right) \left( 1 - \ee^{-\ii \alpha} \right) \ee^{\ii \alpha} \left( 1 - \left( 1 - \ee^{-\ii \alpha} \right) \tfrac{1}{n} \right)^{n-1},
\end{align*}
and so $|\partial_\alpha \gamma| = 0$ iff $\alpha = 0$.

Let us emphasize that the parameter $\alpha$ in formula (\ref{main1}) is \emph{not} the polar angle. Notably however, for $n > 2$ the curve $\gamma$ can be parametrized in the polar coordinates $r, \theta$ via
\begin{align}
\label{polar1}
r(\theta) = \left( 1 - \tfrac{4(n-1)}{n^2} \sin^2 \tfrac{\alpha(\theta)}{2} \right)^{n/2},
\end{align}
where $\alpha(\theta)$ denotes the inverse function of the smooth increasing bijection of $[-\pi,\pi]$ onto itself defined by 
\begin{align}
\label{polar2}
\theta(\alpha) = \alpha - n \arctan\tfrac{\sin\alpha}{n-1+\cos\alpha}.
\end{align}
To see that $\theta(\alpha)$ is indeed increasing, notice that its derivative can be written as $\theta'(\alpha) = \frac{2(n-1)(n-2)\sin^2\tfrac{\alpha}{2}}{(n-2)^2 + 4(n-1)\cos^2\tfrac{\alpha}{2}}$, what is positive for all $\alpha$ except $0$. This implies that $\alpha(\theta)$ is itself a continuous\footnote{It is also smooth everywhere except at $\theta = 0$, where $\alpha'$ does not exist.} increasing bijection. Notice also that $\theta(\alpha) = \alpha$ only at $0$ and $\pm \pi$.
\end{Rem}

\section{Proof of the main result}

In the proof of Theorem \ref{main} we shall use only elementary tools of linear algebra, analysis and general topology. For the sake of legibility it is divided into eight steps. The first step concerns the cases $n=1$ and $n=2$, which have to be analyzed directly, being in some sense degenerate.

\subsubsection*{Step 1: The case $n < 3$.}

In the trivial $n=1$ case formula (\ref{main1}) yields the constant curve $\gamma = 1$, whereas formula (\ref{main2}) reduces to $U = 1$ for any $\alpha$, as they should.

For $n=2$, every $U \in \textup{SU}(2)$ can be written as
\begin{align}
\label{2case}
U = \left[ \begin{array}{cc} z & -\overline{w} \\ w & \overline{z} \end{array} \right] \quad \textrm{with} \quad |z|^2 + |w|^2 = 1,
\end{align}
and so clearly $\Pid(\textup{SU}(2))$ is the real interval $[0,1]$. Formula (\ref{main1}) agrees with that, yielding $\gamma(\alpha) = \cos^2\tfrac{\alpha}{2}$. In order to demonstrate the second part of the theorem, we have to show how to reexpress (\ref{2case}) in the form given by (\ref{main2}) for any complex $z,w$ satisfying $|z|^2 + |w|^2 = 1$. As one can easily check, this can be done, e.g., as follows:
\begin{align*}
& \textnormal{If } w = 0 \textnormal{, then take} \ \ \ee^{\ii \alpha_1} \vc z, \ \ \ee^{\ii \alpha_2} \vc \bar{z}, \ \ \textnormal{and \emph{any} } v \in \sC^2.
\\
& \textnormal{If } z = 0 \textnormal{, then take} \ \ \ee^{\ii \alpha_1} \vc -1, \ \ \ee^{\ii \alpha_2} \vc 1 \ \ \textnormal{and } v \vc \tfrac{1}{\sqrt{2}} \left[\begin{smallmatrix} 1 \\ w \end{smallmatrix} \right].
\\
& \textnormal{If } z,w \neq 0 \textnormal{, take} \ \ \ee^{\ii \alpha_1} \vc (|z| + \ii |w|)\sgn z, \ \ \ee^{\ii \alpha_2} \vc (|z| + \ii |w|)\sgn \bar{z} \ \ \textnormal{and } v \vc \tfrac{1}{\sqrt{2}} \left[\begin{smallmatrix} \sgn z \\ \ii \sgn w \end{smallmatrix} \right].
\end{align*}

\subsubsection*{Step 2: The constrained maximization problem.}

From now on we assume that $n \geq 3$. Our strategy is to fix the polar angle $\theta \in [-\pi,\pi]$ and see how far stretches the $\theta$-rotated positive real axis intersected with $\Pid(\textup{SU}(n))$. More formally, we shall consider the following constrained maximization problem:
\begin{align}
\label{max1}
\textnormal{maximize} \quad & F(U) \vc \RE(\ee^{-\ii \theta} \Pid(U)) \quad \textnormal{over } \textup{SU}(n)
\\
\nonumber
\textnormal{subject to} \quad & G(U) \vc \IM(\ee^{-\ii \theta} \Pid(U)) = 0.
\end{align}
The constrained maximum of course exists (by the extreme value theorem). However, what we want to prove (among other things) is that the constrained maximum equals $|\gamma(\alpha(\theta))|$ with $\alpha(\theta)$ defined in Remark \ref{rem2}, and is attained only at $U$ of the form given by (\ref{main2}).

Let us begin by considering the following special unitary matrix
\begin{align*}
U_\theta \vc \ee^{\ii \alpha(\theta)/n} \left( I - (1 - \ee^{-\ii \alpha(\theta)}) v v^T \right), \quad \textnormal{where } v_k = \tfrac{1}{\sqrt{n}}, \ \forall k.
\end{align*}
By Remark \ref{rem2}, its diagonal product equals $\Pid(U_\theta) = \ee^{\ii\theta}|\gamma(\alpha(\theta))|$, and so it satisfies the constraint $G(U_\theta) = 0$ and yields $F(U_\theta) = |\gamma(\alpha(\theta))|$. Therefore, the constrained maximum for problem (\ref{max1}) is \emph{positive}.

Let now $U_0 \in \textup{SU}(n)$ be a maximizer solving problem (\ref{max1}). Since for any $n$-by-$n$ matrix $A$ we have that
\begin{align}
\label{obs}
\Pid(A) = \Pid(A\diag(\ee^{\ii \beta_1},\ldots,\ee^{\ii \beta_n})) \quad \textnormal{provided} \quad \ee^{\ii \sum_k \beta_k} = 1,
\end{align}
we can assume without loss of generality that the diagonal entries of $U_0$ have all the same argument which, by the fact that $F(U_0) > 0$ and $G(U_0) = 0$, must be equal to $\theta/n$. This allows us to define a unitary matrix
\begin{align}
\label{prod4}
U_1 \vc \ee^{-\ii \theta/n} U_0
\end{align}
with determinant $\ee^{-\ii \theta}$ and the property that all its diagonal entries are real positive numbers. In other words, denoting $U_1 = [c_{jk}]_{j,k=1,\ldots,n}$, we have
\begin{align}
\label{prod5}
c_{jj} > 0, \quad \forall j 
\end{align}
Our aim now is to extract some information about $c_{jk}$'s from the fact that $U_0$ is a maximizer solving problem (\ref{max1}).

\subsubsection*{Step 3: Proving that all $c_{jj}$'s are equal.}

We claim that all diagonal entries of $U_1$ are in fact equal. We shall argue by supposing the contrary. Concretely, to fix attention, let us suppose that $c_{11} \neq c_{22}$ and consider the following problem:
\begin{align}
\label{max2}
\textnormal{maximize} \quad & f(s,t) \vc \RE(\ee^{-\ii \theta} \Pid ((\ee^{sX + tY} \oplus I_{n-2}) U_0 (\ee^{-sX - tY} \oplus I_{n-2})) )
\\
\nonumber
\textnormal{subject to} \quad & g(s,t) \vc \IM(\ee^{-\ii \theta} \Pid ((\ee^{sX + tY} \oplus I_{n-2}) U_0 (\ee^{-sX - tY} \oplus I_{n-2})) ) = 0,
\end{align}
where $X \vc \left[\begin{smallmatrix} 0 & -1 \\ 1 & 0 \end{smallmatrix} \right]$ and $Y \vc \left[\begin{smallmatrix} 0 & \ii \\ \ii & 0 \end{smallmatrix} \right]$. It is in fact a subproblem of (\ref{max1}), in which the maximizer $U_0$ is perturbed in a specific way.

It is convenient to introduce $M \vc \left[\begin{smallmatrix} c_{11} & c_{12} \\ c_{21} & c_{22} \end{smallmatrix} \right]$ and, using (\ref{prod4}) and (\ref{prod5}), express the functions $f, g$ in (\ref{max2}) as
\begin{align*}
& f(s,t) = C \RE\left[\left(\ee^{sX+tY} M \ee^{-sX-tY}\right)_{11} \left(\ee^{sX+tY} M \ee^{-sX-tY}\right)_{22} \right],
\\
& g(s,t) = C \IM\left[\left(\ee^{sX+tY} M \ee^{-sX-tY}\right)_{11} \left(\ee^{sX+tY} M \ee^{-sX-tY}\right)_{22} \right],
\end{align*}
where $C \vc \prod\nolimits_{j=3}^n c_{jj} > 0$.

By the very definition of $U_0$, one solution to problem (\ref{max2}) is attained at the point $(s_0,t_0) = (0,0)$. We shall utilize this fact with the help of the Lagrange multipliers method. Concretely, let us construct the Lagrangian function
\begin{align}
\label{prod9}
\L(s,t;\lambda) & \vc f(s,t) + \lambda g(s,t) 
\\
\nonumber
& = C \RE\left[(1 - \lambda\ii) \left(\ee^{sX+tY} M \ee^{-sX-tY}\right)_{11} \left(\ee^{sX+tY} M \ee^{-sX-tY}\right)_{22} \right]
\end{align}
and calculate its first and second partial derivatives at $(0,0;\lambda)$
\begin{align}
\nonumber
& \tfrac{\partial \L}{\partial s}|_{(0,0;\lambda)} = C(c_{11}-c_{22}) \RE\left[(1 - \lambda\ii)(c_{12}+c_{21})\right],
\\
\nonumber
& \tfrac{\partial \L}{\partial t}|_{(0,0;\lambda)} = C(c_{11}-c_{22}) \RE\left[(1 - \lambda\ii)\ii(c_{12}-c_{21})\right],
\\
\nonumber
& \tfrac{\partial^2 \L}{\partial s^2}|_{(0,0;\lambda)} = 2C \RE\left[(1 - \lambda\ii)((c_{11}-c_{22})^2 - (c_{12}+c_{21})^2)\right],
\\
\label{prod10}
& \tfrac{\partial^2 \L}{\partial t^2}|_{(0,0;\lambda)} = 2C \RE\left[(1 - \lambda\ii)((c_{11}-c_{22})^2 + (c_{12}-c_{21})^2)\right],
\\
\nonumber
& \tfrac{\partial^2 \L}{\partial s \partial t}|_{(0,0;\lambda)} = -2C \RE\left[(1 - \lambda\ii)\ii(c_{12}-c_{21})(c_{12}+c_{21})\right],
\\
\nonumber
& \tfrac{\partial^2 \L}{\partial s \partial \lambda}|_{(0,0;\lambda)} = -2C(c_{11}-c_{22}) \RE\left[\ii(c_{12}+c_{21})\right],
\\
\nonumber
& \tfrac{\partial^2 \L}{\partial t \partial \lambda}|_{(0,0;\lambda)} = 2C(c_{11}-c_{22}) \RE\left[c_{12}-c_{21}\right].
\end{align}

Before invoking Lagrange's theorem, we need to investigate whether the constraint qualification holds, i.e., whether $\nabla g = (\tfrac{\partial^2 \L}{\partial s \partial \lambda}, \tfrac{\partial^2 \L}{\partial t \partial \lambda})$ does not vanish at $(0,0)$. Suppose on the contrary that is \emph{does} vanish, what requires that $\RE\left[\ii(c_{12} + c_{21}) \right] = 0 = \RE\left[c_{12} - c_{21} \right]$, which is in turn equivalent to $c_{21} = \overline{c}_{12}$, i.e., to $M$ being Hermitian. Notice, however, that in such a case the matrix $\ee^{sX + tY}M\ee^{-sX - tY}$ is Hermitian for \emph{any} $s,t$ and hence we would have $g \equiv 0$ (and not just $\nabla g(0,0) = 0$). But this means that $f$ would in fact attain an \emph{unconstrained} maximum at $(0,0)$, and so its first derivatives would vanish:
\begin{align*}
& 0 = \tfrac{\partial f}{\partial s}|_{(0,0)} = \tfrac{\partial \L}{\partial s}|_{(0,0;0)} =  C(c_{11}-c_{22}) \RE\left[c_{12}+c_{21}\right],
\\
& 0 = \tfrac{\partial f}{\partial t}|_{(0,0)} = \tfrac{\partial \L}{\partial t}|_{(0,0;0)} =  C(c_{11}-c_{22}) \RE\left[\ii(c_{12}-c_{21})\right].
\end{align*}
This, together with $c_{11} \neq c_{22}$ and $c_{21} = \overline{c}_{12}$, yields $c_{12} = c_{21} = 0$. However, in such a case the Hessian of $f$ at $(0,0)$ would be equal to $2C(c_{11}-c_{22})^2 I_2$ and therefore be positive-definite, in contradiction with $f$ attaining a \emph{maximum} at that point.

In this way we have shown that $\nabla g(0,0) \neq 0$. On the strength of Lagrange's theorem, there exists $\lambda_0$ such that $(0,0;\lambda_0)$ is a stationary point of $\L$. By the first two formulas of (\ref{prod10}) and the assumption that $c_{11} \neq c_{22}$, this means that $\RE\left[(1-\lambda_0\ii)(c_{12} + c_{21}) \right] = 0 = \RE\left[(1-\lambda_0\ii)\ii(c_{12} - c_{21}) \right]$, which is in turn equivalent to
\begin{align}
\label{prod11}
c_{21} = \ee^{-2\ii \varphi} \, \overline{c}_{12},
\end{align}
where we have introduced $\varphi \vc \Arg(1 - \lambda_0 \ii)$. For later use, however, it is convenient to rewrite (\ref{prod11}) as
\begin{align}
\label{prod12}
c_{12} = \rho \ee^{\ii(\eta - \varphi)} \quad \textnormal{and} \quad c_{21} = -\rho \ee^{-\ii(\eta + \varphi)}
\end{align}
for some angle $\eta$ and $\rho > 0$ (we cannot have $\rho = 0$, because we would arrive at the same contradiction as when investigating the case of vanishing $\nabla g$).

In order to arrive at the final contradiction, let us perform the second derivative test, which in this case requires finding the bordered Hessian of $\L$ at $(0,0;\lambda_0)$. On the strength of (\ref{prod10}) and (\ref{prod12}), after some tedious but straightforward calculations we obtain
\begin{align*}
\left[ \begin{smallmatrix} 0 & \tfrac{\partial^2 \L}{\partial s \partial \lambda} & \tfrac{\partial^2 \L}{\partial t \partial \lambda}
\\
\tfrac{\partial^2 \L}{\partial s \partial \lambda} & \tfrac{\partial^2 \L}{\partial s^2} & \tfrac{\partial^2 \L}{\partial s \partial t}
\\
\tfrac{\partial^2 \L}{\partial t \partial \lambda} & \tfrac{\partial^2 \L}{\partial s \partial t} & \tfrac{\partial^2 \L}{\partial t^2}
\end{smallmatrix} \right]_{(0,0;\lambda_0)} = \tfrac{2C\cos\varphi}{\sqrt{1+\lambda_0^2}} \left[ \begin{smallmatrix} 0 & 2\rho\sqrt{1+\lambda_0^2}(c_{11} - c_{22})\sin\eta & 2\rho\sqrt{1+\lambda_0^2}(c_{11} - c_{22})\cos\eta
\\
2\rho\sqrt{1+\lambda_0^2}(c_{11} - c_{22})\sin\eta & (c_{11} - c_{22})^2+4\rho^2\sin^2\eta & 4\rho^2\sin\eta\cos\eta
\\
2\rho\sqrt{1+\lambda_0^2}(c_{11} - c_{22})\cos\eta & 4\rho^2\sin\eta\cos\eta & (c_{11} - c_{22})^2+4\rho^2\cos^2\eta
\end{smallmatrix} \right]
\end{align*}
Since we are dealing with a bivariate function subject to a single constraint, we are interested only in the (sign of the) bordered Hessian determinant, which turns out to be
\begin{align*}
& 32 C^3\rho^2(c_{11} - c_{22})^2 \sqrt{1+\lambda_0^2}\cos^3\varphi \left| \begin{smallmatrix} 0 & \sin\eta & \cos\eta
\\
\sin\eta & (c_{11} - c_{22})^2+4\rho^2\sin^2\eta & 4\rho^2\sin\eta\cos\eta
\\
\cos\eta & 4\rho^2\sin\eta\cos\eta & (c_{11} - c_{22})^2+4\rho^2\cos^2\eta
\end{smallmatrix} \right|
\\
& = -32 C^3\rho^2(c_{11} - c_{22})^4 \sqrt{1+\lambda_0^2}\cos^3\varphi.
\end{align*}
This expression has the same sign as $-\sqrt{1+\lambda^2_0}\cos\varphi$, which by the definition of $\varphi$ is equal to $-1$. But according to the second derivative test in the considered case (bivariate function, single constraint), this would imply that $f$ attains a constrained \emph{minimum} at $(0,0)$, what is a contradiction.

This finishes the proof that $c_{11}$ and $c_{22}$ --- and in fact all $c_{jj}$'s --- are equal.

\subsubsection*{Step 4: Proving that $c_{jk} = \ee^{-2\ii\varphi}\bar{c}_{kj}$.}

We can still extract some more information about the matrix $U_1 = [c_{jk}]$ introduced in (\ref{prod4}). To this end, we consider yet another subproblem of (\ref{max1}), this time involving functions of $n(n-1)$ variables $s_{jk}$, $t_{jk}$ indexed by $1 \leq j < k \leq n$, namely:
\begin{align}
\label{max3}
\textnormal{maximize} \quad & \tilde{f}((s_{jk}),(t_{jk})) \vc \RE\left[\ee^{-\ii \theta} \Pid\left(\exp\left(\sum\nolimits_{j<k} s_{jk}X_{jk} + t_{jk}Y_{jk}\right) U_0\right) \right]
\\
\nonumber
\textnormal{subject to} \quad & \tilde{g}((s_{jk}),(t_{jk}))) \vc \IM\left[\ee^{-\ii \theta} \Pid\left(\exp\left(\sum\nolimits_{j<k} s_{jk}X_{jk} + t_{jk}Y_{jk}\right) U_0\right) \right] = 0,
\end{align}
where $X_{jk}$ (resp. $Y_{jk}$) is the $n$-by-$n$ skew-Hermitian matrix whose only non-zero entries are the $(j,k)$-th entry equal to $-1$ (resp. $\ii$) and the $(k,j)$-th entry equal to $1$ (resp. $\ii$)\footnote{For example, for $n=3$ we have $$X_{12} = \left[\begin{smallmatrix} 0 & -1 & 0 \\ 1 & 0 & 0 \\ 0 & 0 & 0 \end{smallmatrix} \right], \quad X_{13} = \left[\begin{smallmatrix} 0 & 0 & -1 \\ 0 & 0 & 0 \\ 1 & 0 & 0 \end{smallmatrix} \right], \quad X_{23} = \left[\begin{smallmatrix} 0 & 0 & 0 \\ 0 & 0 & -1 \\ 0 & 1 & 0 \end{smallmatrix} \right], \quad Y_{12} = \left[\begin{smallmatrix} 0 & \ii & 0 \\ \ii & 0 & 0 \\ 0 & 0 & 0 \end{smallmatrix} \right], \quad Y_{13} = \left[\begin{smallmatrix} 0 & 0 & \ii \\ 0 & 0 & 0 \\ \ii & 0 & 0 \end{smallmatrix} \right], \quad Y_{23} = \left[\begin{smallmatrix} 0 & 0 & 0 \\ 0 & 0 & \ii \\ 0 & \ii & 0 \end{smallmatrix} \right].$$}.

Similarly as in the previous step, by the very definition of $U_0$, one solution to problem (\ref{max3}) is attained at the point $O$ with all $s$-coordinates and $t$-coordinates equal to $0$. Just like before, let us construct the Lagrangian function
\begin{align}
\label{prod13}
\tilde{\L}((s_{jk}),(t_{jk});\lambda) & \vc \tilde{f}((s_{jk}),(t_{jk})) + \lambda \tilde{g}((s_{jk}),(t_{jk}))
\\
\nonumber
& = \RE\left[(1 - \lambda \ii) \Pid\left(\exp\left(\sum\nolimits_{j<k} s_{jk}X_{jk} + t_{jk}Y_{jk}\right) U_1\right) \right],
\end{align}
where we have used (\ref{prod4}). Since the diagonal entries of $U_1$ are equal, $c_{11} = \ldots = c_{nn} \cv c > 0$, it is not hard to obtain that
\begin{align}
\nonumber
& \tfrac{\partial \tilde{\L}}{\partial s_{jk}}|_{(O;\lambda)} = c^{n-1}\RE\left[(1 - \lambda\ii)\tr(X_{jk}U_1)\right] = c^{n-1}\RE\left[(1 - \lambda\ii)(c_{jk} - c_{kj})\right],
\\
\label{prod14}
& \tfrac{\partial \tilde{\L}}{\partial t_{jk}}|_{(O;\lambda)} = c^{n-1}\RE\left[(1 - \lambda\ii)\tr(Y_{jk}U_1)\right] = c^{n-1}\RE\left[(1 - \lambda\ii)\ii(c_{jk} + c_{kj})\right],
\\
\nonumber
& \tfrac{\partial \tilde{g}}{\partial s_{jk}}|_O = \tfrac{\partial^2 \tilde{\L}}{\partial s_{jk} \partial \lambda}|_{(O;\lambda)} = - c^{n-1}\RE\left[ \ii \tr(X_{jk}U_1)\right] = -c^{n-1}\RE\left[\ii(c_{jk} - c_{kj})\right],
\\
\nonumber
& \tfrac{\partial \tilde{g}}{\partial t_{jk}}|_O = \tfrac{\partial^2 \tilde{\L}}{\partial t_{jk} \partial \lambda}|_{(O,\lambda)} = - c^{n-1}\RE\left[ \ii \tr(Y_{jk}U_1)\right] = c^{n-1}\RE\left[c_{jk} + c_{kj}\right].
\end{align}

Before invoking Lagrange's theorem, let us separately investigate the case when $\nabla \tilde{g}$ vanishes at $O$, i.e., when $\RE\left[\ii(c_{jk} - c_{kj})\right] = 0 = \RE\left[c_{jk} + c_{kj}\right]$ for every $j < k$, what is in turn equivalent to
\begin{align}
\label{prod15}
\forall j \neq k \quad c_{jk} = -\bar{c}_{kj}.
\end{align}
We claim that in such a case $\tilde{g}$ vanishes on every $s$-coordinate axis and on every $t$-coordinate axis, i.e., we claim that for any fixed $j < k$
\begin{align}
\label{prod16}
& \IM\left[\Pid\left(\ee^{s_{jk}X_{jk}} U_1\right) \right] = 0  \quad \textnormal{for all } s_{jk}
\\
\label{prod17}
& \IM\left[\Pid\left(\ee^{t_{jk}Y_{jk}} U_1\right) \right] = 0  \quad \textnormal{for all } t_{jk}.
\end{align}

Indeed, let $M$ denote the principal $2$-by-$2$ submatrix of $U_1$ associated with the $j$-th and $k$-th rows and columns, which by (\ref{prod15}) and the previous step of the proof can be expressed as follows
\begin{align*}
M \vc \left[ \begin{array}{cc} c_{jj} & c_{jk} \\ c_{kj} & c_{kk} \end{array} \right] = \left[ \begin{array}{cc} c & -\bar{d} \\ d & c \end{array} \right],
\end{align*}
where $c > 0$ and $d$ is some complex number. We can now rewrite the left-hand sides of (\ref{prod16},\ref{prod17}) as
\begin{align*}
& \IM\left[\Pid\left(\ee^{sX_{jk}} U_1\right) \right] = c^{n-2}\IM\left[\left(\ee^{sX} M\right)_{11}\left(\ee^{sX} M\right)_{22} \right],
\\
& \IM\left[\Pid\left(\ee^{tY_{jk}} U_1\right) \right] = c^{n-2}\IM\left[\left(\ee^{tY} M\right)_{11}\left(\ee^{tY} M\right)_{22} \right],
\end{align*}
where $X \vc \left[\begin{smallmatrix} 0 & -1 \\ 1 & 0 \end{smallmatrix} \right]$ and $Y \vc \left[\begin{smallmatrix} 0 & \ii \\ \ii & 0 \end{smallmatrix} \right]$ and we have renamed $s_{jk}$, $t_{jk}$ as $s$, $t$ for simplicity. The $2$-by-$2$ matrices involved can be easily calculated
\begin{align*}
& \ee^{sX} M = \left[ \begin{array}{cc} \cos s & -\sin s \\ \sin s & \cos s \end{array} \right] \left[ \begin{array}{cc} c & -\bar{d} \\ d & c \end{array} \right] = \left[ \begin{array}{cc} c \cos s - d \sin s  & \ast \\ \ast & c \cos s - \bar{d} \sin s \end{array} \right],
\\
& \ee^{tY} M = \left[ \begin{array}{cc} \cos t & \ii \sin t \\ \ii \sin t & \cos t \end{array} \right] \left[ \begin{array}{cc} c & -\bar{d} \\ d & c \end{array} \right] = \left[ \begin{array}{cc} c \cos t + \ii d \sin t  & \ast \\ \ast & c \cos t - \ii \bar{d} \sin t \end{array} \right],
\end{align*}
and we see that the imaginary parts of their diagonal products vanish, as claimed.

The vanishing of $\tilde{g}$ on the coordinate axes implies, in turn, that $\tilde{f}$ restricted to any coordinate axis attains a maximum at $0$. Hence its partial derivatives must all vanish at $O$, and with the help of (\ref{prod14}) we can write that
\begin{align*}
& 0 = \tfrac{\partial \tilde{f}}{\partial s_{jk}}|_{O} = \tfrac{\partial \tilde{\L}}{\partial s_{jk}}|_{(O;0)} = c^{n-1}\RE\left[c_{jk} - c_{kj}\right],
\\
& 0 = \tfrac{\partial \tilde{f}}{\partial t_{jk}}|_{O} = \tfrac{\partial \tilde{\L}}{\partial t_{jk}}|_{(O;0)} = c^{n-1}\RE\left[\ii(c_{jk} + c_{kj})\right]
\end{align*}
for every $j < k$. But this means that $c_{jk} = \bar{c}_{jk}$, which together with (\ref{prod15}) yields $c_{jk} = 0$ for all $j \neq k$. All in all, the assumption of $\nabla \tilde{g}$ vanishing at $O$ leads to $U_1 = cI$, which by the positivity of $c$ and the unitary of $U_1$ leaves $c = 1$ as the only option.

Let us now assume that $\nabla \tilde{g}$ does \emph{not} vanish at $O$. By Lagrange's theorem, there exists $\lambda_0$ such that $(O;\lambda_0)$ is a stationary point of $\tilde{\L}$. By the first two formulas of (\ref{prod14}), this means that $\RE\left[(1-\lambda_0\ii)(c_{jk} - c_{kj}) \right] = 0 = \RE\left[(1-\lambda_0\ii)\ii(c_{jk} + c_{kj}) \right]$ for every $j < k$, which is in turn equivalent to
\begin{align}
\label{prod18}
\forall j \neq k \quad c_{kj} = \ee^{-2\ii \varphi} \, \overline{c}_{jk},
\end{align}
where we have introduced $\varphi \vc \Arg(1 - \lambda_0 \ii)$. Notice that the case $U_1 = I$ is also included in (\ref{prod18}).

\subsubsection*{Step 5: Further narrowing down the form of $U_0$.}

So far we have proven that $U_0$ solving (\ref{max1}) can be written in the matrix form as (cf. (\ref{prod4}) and the previous two steps)
\begin{align}
\label{prod19}
U_0 & = \ee^{\ii \tfrac{\theta}{n}} \left[ \begin{array}{ccccc} c & c_{12} & c_{13} & \cdots & c_{1n} \\ \overline{c}_{12} \ee^{-2\ii\varphi} & c & c_{23} & \cdots & c_{2n} \\ \overline{c}_{13} \ee^{-2\ii\varphi} & \overline{c}_{23} \ee^{-2\ii\varphi} & c & \cdots & c_{3n} \\ \vdots & \vdots & \vdots & \ddots & \vdots  \\ \overline{c}_{1n} \ee^{-2\ii\varphi} & \overline{c}_{2n} \ee^{-2\ii\varphi} & \overline{c}_{3n} \ee^{-2\ii\varphi} & \cdots & c \end{array} \right]
\\
\nonumber
& = \ee^{\ii \left(\tfrac{\theta}{n} - \varphi\right)} \left[ \begin{array}{ccccc} c \ee^{\ii\varphi} & c_{12} \ee^{\ii\varphi} & c_{13} \ee^{\ii\varphi} & \cdots & c_{1n} \ee^{\ii\varphi} \\ \overline{c}_{12} \ee^{-\ii\varphi} & c \ee^{\ii\varphi} & c_{23} \ee^{\ii\varphi} & \cdots & c_{2n} \ee^{\ii\varphi} \\ \overline{c}_{13} \ee^{-\ii\varphi} & \overline{c}_{23} \ee^{-\ii\varphi} & c \ee^{\ii\varphi} & \cdots & c_{3n} \ee^{\ii\varphi} \\ \vdots & \vdots & \vdots & \ddots & \vdots  \\ \overline{c}_{1n} \ee^{-\ii\varphi} & \overline{c}_{2n} \ee^{-\ii\varphi} & \overline{c}_{3n} \ee^{-\ii\varphi} & \cdots & c \ee^{\ii\varphi} \end{array} \right] = \ee^{\ii \left(\tfrac{\theta}{n} - \varphi\right)} \left(H + \ii c \sin\varphi I \right),
\end{align}
where $H$ is a Hermitian matrix with all diagonal entries equal to $c \cos\varphi$.

The Hermiticity of $H$ together with the unitarity of $H + \ii c \sin\varphi I$ imply that $U_0$ has at most two eigenvalues
\begin{align}
\label{prod20}
\lambda_\pm \vc \ee^{\ii \left(\tfrac{\theta}{n} - \varphi\right)} \left(\pm \sqrt{1 - c^2 \sin^2\varphi} + \ii c \sin\varphi \right),
\end{align}
which means that $U_0$ must be a linear combination of at most two orthogonal projections
\begin{align}
\label{prod21}
U_0 = \lambda_+(I - VV^\dag) + \lambda_-VV^\dag,
\end{align}
where $V$ is some $n$-by-$m$ matrix whose columns are orthonormal, with $m$ being the multiplicity of $\lambda_-$ in the spectrum of $U_0$. 

If $m=0$ or $m=n$, $U_0$ becomes a scalar matrix, but since $G(U_0) = 0$ and $\det U_0 = 1$ this requires $\theta = 0$ and $U_0 = I$ or, provided $n$ is even, $U_0 = -I$. Notice that this trivial case agrees with the statement of Theorem \ref{main} (for $\alpha = 0$, to be precise).

Assuming from now on that $1 < m < n$, let us rewrite (\ref{prod21}) with the help of (\ref{prod20}) as
\begin{align}
\label{prod22}
\ee^{\ii \left(\varphi - \tfrac{\theta}{n}\right)} U_0 = \left(\sqrt{1 - c^2 \sin^2\varphi} + \ii c \sin\varphi \right) I -2\sqrt{1 - c^2 \sin^2\varphi}  \, VV^\dag.
\end{align}
From (\ref{prod19}) we obtain that, for every $j$-th diagonal entry of $H$,
\begin{align}
\label{prod23}
c \cos\varphi = \sqrt{1 - c^2 \sin^2\varphi} \left( 1 - 2 \sum\nolimits_{k=1}^m |v_{jk}|^2 \right).
\end{align}
This means that, firstly, the sum $\sum_{k=1}^m |v_{jk}|^2$ does not depend on $j$ and hence
\begin{align}
\label{prod24}
\sum\nolimits_{k=1}^m |v_{jk}|^2 = \frac{m}{n}.
\end{align}
Secondly, we can now solve (\ref{prod23}) for $c$, obtaining
\begin{align}
\label{prod25}
c = \frac{|n - 2m|}{\sqrt{n^2 \cos^2 \varphi + (n - 2m)^2\sin^2\varphi}}.
\end{align}

Let us now reexpress the two eigenvalues (\ref{prod20}) substituting (\ref{prod25}) for $c$ (and bearing in mind that $\cos \varphi = 1/\sqrt{1 + \lambda_0^2} > 0$, cf. the sentence following formula (\ref{prod18}))
\begin{align}
\label{prod26}
\lambda_\pm = \ee^{\ii \left(\tfrac{\theta}{n} - \varphi\right)} \frac{\pm n \cos\varphi + \ii |n - 2m|\sin\varphi}{\sqrt{n^2 \cos^2 \varphi + (n - 2m)^2\sin^2\varphi}}.
\end{align}
It is convenient to introduce an auxilliary complex number of modulus one 
\begin{align*}
\xi \vc \frac{n \cos\varphi + \ii |n - 2m|\sin\varphi}{\sqrt{n^2 \cos^2 \varphi + (n - 2m)^2\sin^2\varphi}}
\end{align*}
and write the eigenvalues as
\begin{align*}
\lambda_+ = \ee^{\ii \left(\tfrac{\theta}{n} - \varphi\right)} \xi \qquad \textnormal{and} \qquad \lambda_- = - \ee^{\ii \left(\tfrac{\theta}{n} - \varphi\right)} \bar{\xi}.
\end{align*}
In this language, formula (\ref{prod22}) becomes
\begin{align}
\label{prod27}
\ee^{\ii \left(\varphi - \tfrac{\theta}{n}\right)} U_0 = \xi(I - VV^\dag) - \bar{\xi}VV^\dag = \xi \left( I - \left( 1 + \bar{\xi}^2\right) VV^\dag \right).
\end{align}

We still have not used the fact that $\det U_0 = 1$. Since the determinant is equal to the product of eigenvalues (with multiplicities), we have that
\begin{align*}
\ee^{\ii \left(n\varphi - \theta\right)} = \xi^{n-m} (-\bar{\xi})^m = \xi^n (-\bar{\xi}^2)^m,
\end{align*}
and so, taking the $n$-th root,
\begin{align*}
\ee^{\ii \left(\varphi - \tfrac{\theta}{n}\right)} = \ee^{\ii \tfrac{2l\pi}{n}} \xi (-\bar{\xi}^2)^{\tfrac{m}{n}}
\end{align*}
for some $l \in \sZ$. But this means that (\ref{prod27}) can be simplified to
\begin{align}
\label{prod28}
\ee^{\ii \tfrac{2l\pi}{n}} (-\bar{\xi}^2)^{\tfrac{m}{n}} U_0 = I - ( 1 + \bar{\xi}^2) VV^\dag
\end{align}
To further simplify, introduce a new angle $\alpha \in (-\pi, \pi]$ such that $\ee^{\ii \alpha} \vc -\xi^2$ and transform (\ref{prod28}) into
\begin{align}
\label{prod29}
U_0 = \ee^{\ii \tfrac{m \alpha - 2l\pi}{n}} \left(I - \left( 1 - \ee^{-\ii \alpha} \right) VV^\dag \right)
\end{align}

Recall that in Step 2 we have assumed without loss of generality that the diagonal entries of $U_0$ have all the same argument, what was based on observation (\ref{obs}). Using the latter again to \emph{drop} the former assumption, we find that any $U_0$ solving constrained maximization problem (\ref{max1}) must be of the form
\begin{align}
\label{prod30}
U_0 = \left(I - \left( 1 - \ee^{-\ii \alpha} \right) VV^\dag \right) \diag(\ee^{\ii \alpha_1},\ldots,\ee^{\ii \alpha_n}), \quad \textnormal{where} \quad \ee^{\ii \sum_k \alpha_k } = \ee^{\ii m \alpha}
\end{align}
Evaluating the diagonal product we obtain, using (\ref{prod24}),
\begin{align}
\label{prod31}
\Pid(U_0) =  \ee^{\ii m \alpha} \left( 1 - \left( 1 - \ee^{-\ii \alpha} \right) \tfrac{m}{n} \right)^n.
\end{align}

\subsubsection*{Step 6: The reduced constrained maximization problem.}

At this point we have managed to reduce the original constrained maximization problem (\ref{max1}) on $\textrm{SU}(n)$ to the following one
\begin{align}
\label{max4}
\textnormal{maximize} \quad & \RE(\ee^{-\ii \theta} \Gamma(\alpha,m)) \quad \textnormal{over } \alpha \in [-\pi,\pi], \ m \in \{1,\ldots,n-1\}
\\
\nonumber
\textnormal{subject to} \quad & \IM(\ee^{-\ii \theta} \Gamma(\alpha,m)) = 0,
\end{align}
where the bivariate function involved is $\Gamma(\alpha,m) \vc \ee^{\ii m \alpha} \left( 1 - \left( 1 - \ee^{-\ii \alpha} \right) \tfrac{m}{n} \right)^n$. Let us then \emph{forget} about the original problem (for a while) and focus on (\ref{max4}), or rather on its slightly more general and convenient, continuous-second-variable version:
\begin{align}
\label{max5}
\textnormal{maximize} \quad & \RE(\ee^{-\ii \theta} \Gamma(\alpha,y)) \quad \textnormal{over } \alpha \in [-\pi,\pi], \ y \in [1,n-1]
\\
\nonumber
\textnormal{subject to} \quad & \IM(\ee^{-\ii \theta} \Gamma(\alpha,y)) = 0,
\end{align}
where from now on $\Gamma: [-\pi,\pi] \times [1,n-1] \rightarrow \sC$ is defined as
\begin{align}
\label{Gammadef}
\Gamma(\alpha,y) \vc \ee^{\ii y \alpha} \left( 1 - \left( 1 - \ee^{-\ii \alpha} \right) \tfrac{y}{n} \right)^n.
\end{align}

In other words, we are interested in how far (from the origin) stretches the $\theta$-rotated positive real axis intersected with the image $\Gamma([-\pi,\pi] \times [1,n-1])$. Notice that $\Gamma(\alpha(\theta),1) = \ee^{\ii\theta}|\gamma(\alpha(\theta))|$, with $\alpha(\theta)$ defined in Remark \ref{rem2} (cf. the second paragraph of Step 2 above), so the constrained maximum solving (\ref{max5}) is positive.

Observe first that 
\begin{align}
\label{prod32}
\Gamma(\alpha,n-y) = \Gamma(-\alpha,y) = \overline{\Gamma(\alpha,y)},
\end{align}
so without loss of generality we can restrict $\Gamma$ to $\Delta \vc [0,\pi] \times [1,n-1]$. We claim that the search space in problem (\ref{max5}) can be narrowed down to $\Gamma(\partial \Delta)$.

In order to prove the claim, let us first show that the Jacobian of $\Gamma$ is positive on the interior $\Delta^\circ = (0,\pi) \times (1,n-1)$ (where we tacitly identify $\sC$ with $\sR^2$). Indeed, observe that
\begin{align*}
J[\Gamma] \vc \left| \begin{array}{cc} \partial_\alpha \RE \Gamma & \partial_\alpha \IM \Gamma \\ \partial_y \RE \Gamma & \partial_y \IM \Gamma \end{array} \right| = \RE(\partial_\alpha \Gamma) \IM(\partial_y \Gamma) - \IM(\partial_\alpha \Gamma) \RE(\partial_y \Gamma) = \IM ( \overline{\partial_\alpha \Gamma} \, \partial_y \Gamma).
\end{align*}
Calculating the first derivatives, we obtain
\begin{align*}
& \partial_\alpha \Gamma = \ii y \left(1-\tfrac{y}{n}\right) \left( 1 - \ee^{-\ii \alpha} \right) \ee^{\ii y \alpha} \left( 1 - \left( 1 - \ee^{-\ii \alpha} \right) \tfrac{y}{n} \right)^{n-1},
\\
& \partial_y \Gamma = \left[ \ii \alpha - \left( 1 + \tfrac{\ii y \alpha}{n} \right) \left( 1 - \ee^{-\ii \alpha} \right) \right] \ee^{\ii y \alpha} \left( 1 - \left( 1 - \ee^{-\ii \alpha} \right) \tfrac{y}{n} \right)^{n-1},
\end{align*}
and hence
\begin{align*}
J[\Gamma] & = \IM ( \overline{\partial_\alpha \Gamma} \, \partial_y \Gamma) = |\Gamma|^{2n-2} y \left(1-\tfrac{y}{n}\right) \IM\left[ -\ii \left(1 - \ee^{\ii \alpha}\right) \left( \ii \alpha - \left( 1 + \tfrac{\ii y \alpha}{n} \right) \left(1 - \ee^{-\ii \alpha} \right) \right) \right]
\\
& = |\Gamma|^{2n-2} y \left(1-\tfrac{y}{n}\right) \IM\left[ \alpha \left(1 - \ee^{\ii \alpha}\right) + \left( \ii - \tfrac{y \alpha}{n} \right) \left|1 - \ee^{-\ii \alpha} \right|^2  \right]
\\
& = |\Gamma|^{2n-2} y \left(1-\tfrac{y}{n}\right) \left[ -\alpha \sin \alpha + \left|1 - \ee^{-\ii \alpha} \right|^2 \right]
\\
& = |\Gamma|^{2n-2} y \left(1-\tfrac{y}{n}\right) \left( 2 - 2\cos\alpha - \alpha \sin \alpha \right)
\\
& = |\Gamma|^{2n-2} y \left(1-\tfrac{y}{n}\right) \sin \alpha \left( 2\tan \tfrac{\alpha}{2} - \alpha \right) > 0,
\end{align*}
because $|\Gamma|$ vanishes only at the point $(\pi,\tfrac{n}{2}) \not\in \Delta^\circ$.

The fact that $J[\Gamma] > 0$ on $\Delta^\circ$ implies that $\Gamma(\Delta^\circ)$ is open, and hence contained in the interior $\Gamma(\Delta)^\circ$. But a solution to (\ref{max5}) cannot be attained at an interior point, because around such a point there exists a neighborhood contained in $\Gamma(\Delta)$, within which we could always move farther from the origin along the $\theta$-rotated positive real axis. This in turn means that we can narrow the search space in (\ref{max5}) \emph{at least}\footnote{This reasoning actually enables us to narrow the search space in (\ref{max5}) even to $\partial \Gamma(\Delta)$, but its superset $\Gamma(\partial \Delta)$ is sufficient for our purposes and easier to analyze.} to $\Gamma(\partial \Delta)$, because
\begin{align}
\label{gt}
\Gamma(\Delta) \setminus \Gamma(\Delta^\circ) \subset \Gamma(\Delta \setminus \Delta^\circ) = \Gamma(\partial \Delta).
\end{align}

\begin{figure}[h]
\centering
  \includegraphics[width=1.05\linewidth]{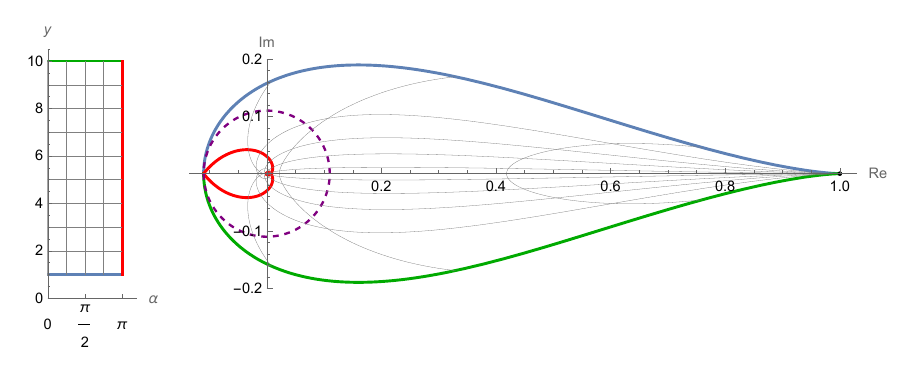}
\caption{\label{fig3}}The right panel shows the image of $\Delta \vc [0,\pi] \times [1,n-1]$ (shown in the left panel) under the map $\Gamma$ for $n=12$. Additionally shown is the circle of radius $(1-2/n)^n$ centered at $0$.
\end{figure}

Let us then investigate what happens with the boundary $\partial \Delta$ under the map $\Gamma$ (see Figure \ref{fig3} for illustration). The four sides of $\partial \Delta = \itSigma_\textup{L} \cup \itSigma_\textup{R} \cup \itSigma_\textup{B} \cup \itSigma_\textup{T}$ become curves parametrized as follows: 
\begin{itemize}
\item The left side $\itSigma_\textup{L}$ becomes squished to a single point, $\Gamma(0,y) = 1$ for all $y$.
\item The right side becomes: $\Gamma(\itSigma_\textup{R}) = \{ \Gamma(\pi,y) = \ee^{\ii y \pi} \left( 1 - \tfrac{2y}{n} \right)^n \, | \, y \in [1,n - 1]\}$.
\item The bottom side: $\Gamma(\itSigma_\textup{B}) = \{ \Gamma(\alpha,1) = \ee^{\ii \alpha} \left( 1 - \left( 1 - \ee^{-\ii \alpha} \right) \tfrac{1}{n} \right)^n \, | \, \alpha \in [0,\pi] \} = \gamma([0,\pi])$.
\item The top side: $\Gamma(\itSigma_\textup{T}) = \{ \Gamma(\alpha,n-1) = \Gamma(-\alpha,1) \, | \, \alpha \in [0,\pi]\} = \gamma([-\pi,0])$, where we have used (\ref{prod32}).
\end{itemize}

Observe that $\Gamma(\itSigma_\textup{L}) \cup \Gamma(\itSigma_\textup{B}) \cup \Gamma(\itSigma_\textup{T})$ is nothing but the image of the curve $\gamma$. Moreover, $\Gamma(\itSigma_\textup{R})$ is contained in the compact region enclosed by $\gamma$, because
\begin{align*}
|\Gamma(\pi,y)| = \left| 1 - \tfrac{2y}{n} \right|^n \leq (1 - \tfrac{2}{n})^n \leq |\gamma(\alpha)|
\end{align*}
for any $y \in [1,n-1]$ and $\alpha \in [-\pi,\pi]$, with equalities attained only at the intersection point of $\Gamma(\itSigma_\textup{R})$ with the image of $\gamma$, which is $-(1 - \tfrac{1}{n})^n = \gamma(\pm \pi) = \Gamma(\pm \pi, 1) = \Gamma(\pm \pi, n-1)$. Therefore, the set $\itSigma_\textup{R}$ can also be removed from the search space in (\ref{max5}).

This, in turn, demonstrates that the constrained maximum in problem (\ref{max5}), and hence in problems (\ref{max4}) and (\ref{max1}), is indeed equal to $|\gamma(\alpha(\theta))|$ with $\alpha(\theta)$ defined in Remark \ref{rem2}, for any chosen polar angle $\theta$.

\subsubsection*{Step 7: Final derivation of formula (\ref{main2}).}

The question remains what the value of the constrained maximum in (\ref{max1}) tells us about the form of the maximizer $U_0$, so far known to be given by (\ref{prod30}). Since its diagonal product is $\Pid(U_0) = \Gamma(\alpha,m)$, what we are asking now is: For which $(\alpha, m)$ the point $\Gamma(\alpha,m)$ lies on the image of $\gamma$?

We claim there are only three (families of) possibilities:
\begin{enumerate}[(i)]
\item $(\alpha, 1)$ for any $\alpha$, simply because $\Gamma(\alpha,1) = \gamma(\alpha)$,
\item $(\alpha, n-1)$ for any $\alpha$, because $\Gamma(\alpha,n-1) = \Gamma(-\alpha,1) = \gamma(-\alpha)$ by (\ref{prod32}),
\item $(0,m)$ for any $m$, because trivially $\Gamma(0,m) = 1 = \gamma(0)$.
\end{enumerate}

In order to prove that for any $\alpha \neq 0$ and any $m \in \{2,\ldots,n-2\}$ the point $\Gamma(\alpha, m)$ does \emph{not} lie on the image of $\gamma$, suppose on the contrary that it does, i.e., that $\Gamma(\alpha, m) = \gamma(\alpha_\ast)$ for some $\alpha_\ast$. There are three options, all leading to contradition, namely
\begin{itemize}
\item If $\alpha \in (0,\pi)$, then $\gamma(\alpha_\ast)$ would be an interior point of $\Gamma(\Delta)$, which is impossible because then $|\gamma(\alpha_\ast)|$ would not be a constrained maximum for problem (\ref{max5}) with $\theta = \theta(\alpha_\ast)$ given by (\ref{polar2}).
\item If $\alpha \in (-\pi,0)$, then by (\ref{prod32}) $\Gamma(\alpha, m) = \Gamma(-\alpha, n-m)$, and so $\gamma(\alpha_\ast)$ would again be an interior point of $\Gamma(\Delta)$, just like in the previous case.
\item If $\alpha = \pm\pi$, then we would have $(1-\tfrac{2}{n})^n \leq |\gamma(\alpha_\ast)| = |\Gamma(\pm\pi, m)| = |1-\tfrac{2m}{n}|^n < (1-\tfrac{2}{n})^n$, which is absurd.
\end{itemize}

Possibility (i) means we put $m=1$ in formula (\ref{prod30}), thus obtaining (\ref{main2}).

Possibility (ii) means we put $m=n-1$ in formula (\ref{prod30}), but it can be then transformed into (\ref{main2}) with $\tilde{\alpha} \vc -\alpha$ in place of $\alpha$. Indeed, since $V$ is an $n$-by-$(n-1)$ matrix whose columns are orthonormal, there exists a column vector $v \in \sC^n$ such that $VV^\dag = I - vv^\dag$. Its components must satisfy $|v_j| = 1/\sqrt{n}$ for all $j$ on the strength of (\ref{prod24}) and one can thus write that
\begin{align*}
U_0 & = \left(I - \left( 1 - \ee^{-\ii \alpha} \right) VV^\dag \right) \diag(\ee^{\ii \alpha_1},\ldots,\ee^{\ii \alpha_n}) 
\\
& = \left(I - \left( 1 - \ee^{-\ii \alpha} \right) (I - vv^\dag) \right) \diag(\ee^{\ii \alpha_1},\ldots,\ee^{\ii \alpha_n})
\\
& = \left(I - \left( 1 - \ee^{\ii \alpha} \right) vv^\dag \right)\diag(\ee^{\ii (\alpha_1-\alpha)},\ldots,\ee^{\ii( \alpha_n-\alpha)})
\\
& = \left(I - \left( 1 - \ee^{-\ii \tilde{\alpha}} \right) vv^\dag \right)\diag(\ee^{\ii \tilde{\alpha}_1},\ldots,\ee^{\ii \tilde{\alpha}_n}),
\end{align*}
where $\tilde{\alpha}_k \vc \alpha_k - \alpha$ satisfy $\ee^{\ii \sum\nolimits_k \tilde{\alpha}_k} = \ee^{\ii \sum\nolimits_k \alpha_k}\ee^{-\ii n\alpha} = \ee^{\ii (n-1)\alpha}\ee^{-\ii n\alpha} = \ee^{-\ii \alpha} = \ee^{\ii \tilde{\alpha}}$, as they should.

Finally, possibility (iii) means we put $\alpha = 0$ in formula (\ref{prod30}). This gives simply $U_0 = \diag(\ee^{\ii \alpha_1},\ldots,\ee^{\ii \alpha_n})$ with $\ee^{\ii \sum\nolimits_k \alpha_k} = 1$, erasing $m$ from the expression whatsoever and yielding a case included in formula (\ref{main2}) as well.

This concludes the proof of the second part of Theorem \ref{main}.

\subsubsection*{Step 8: Showing that $\Pid(\textup{SU}(n))$ is the entire compact region enclosed by $\gamma$.}

Let $R_\gamma$ denote the compact region enclosed by the curve $\gamma$. We have already shown that $\Pid(\textup{SU}(n)) \subset R_\gamma$, but we still need to show the converse inclusion. To this end, let us consider the following subclass of special unitary matrices
\begin{align}
\label{Amatrix}
A(\alpha,\omega) & \vc \left( I - (1 - \ee^{-\ii \alpha})v(\omega)v^\dag(\omega) \right)\diag(\ee^{\ii \alpha},1,\ldots,1), 
\\
\nonumber
\textnormal{with} \quad v(\omega) & \vc \big[ \begin{array}{cccc} \cos\omega & \tfrac{1}{\sqrt{n-1}}\sin\omega & \ldots & \tfrac{1}{\sqrt{n-1}}\sin\omega \end{array} \big]^T,
\end{align}
for any $\alpha \in S^1$ and any $\omega \in [0,\arctan\sqrt{n-1}]$. It is easy to check that any such matrix indeed belongs to $\textup{SU}(n)$, and so $\Pid(A(S^1 \times [0,\arctan\sqrt{n-1}])) \subset \Pid(\textup{SU}(n)) \subset R_\gamma$.

In order to show the converse inclusion $R_\gamma \subset \Pid(A(S^1 \times [0,\arctan\sqrt{n-1}]))$, suppose, on the contrary, that there exists $z_0 \in R_\gamma \setminus \Pid(A(S^1 \times [0,\arctan\sqrt{n-1}]))$, and so the map $\Pid \circ A: S^1 \times [0,\arctan\sqrt{n-1}] \rightarrow \sC \setminus \{z_0\}$ is well defined. Notice that
\begin{align*}
\Pid(A(\alpha,\omega)) = \ee^{\ii\alpha}\left[ 1 - (1 - \ee^{-\ii\alpha})\cos^2\omega\right]\left[ 1 - (1 - \ee^{-\ii\alpha})\tfrac{\sin^2\omega}{n-1}\right]^{n-1},
\end{align*}
and so in particular $\Pid(A(\alpha,\arctan\sqrt{n-1})) = \gamma(\alpha)$ for every $\alpha$. The point $z_0$ thus lies somewhere within the region enclosed by $\gamma$. Observe, however, that $\Pid(A(\alpha,0)) = 1$ for every $\alpha$, which means that $\Pid \circ A$, being continuous, is a \emph{homotopy} between $\gamma$ and the point 1, in contradiction with $\sC \setminus \{z_0\}$ not being simply connected. \qed

\medskip

The main result is thus proven. Let us finish with a corollary concerning the diagonal product of special \emph{orthogonal} matrices.
\begin{Cor}
\label{cor}
For any fixed $n \geq 1$ one has $\Pid(\textup{SO}(n)) = [-(1-2/n)^n,1]$. Moreover, for any $U \in \textup{SO}(n)$,
\begin{itemize}
\item $\Pid(U) = 1$ iff $U = \diag(\sigma_1,\ldots,\sigma_n)$, where $\forall j \ \sigma_j \in \{-1,1\}$ with $\prod\nolimits_j \sigma_j = 1$.
\item $\Pid(U) = -(1-2/n)^n$ iff $U = \left( I - 2 u u^T \right)\diag(\sigma_1,\ldots,\sigma_n)$, where $u \in \sR^n$ satisfies $|u_k| = \tfrac{1}{\sqrt{n}}$ for all $k$, and $\forall j \ \sigma_j \in \{-1,1\}$ with $\prod\nolimits_j \sigma_j = -1$.
\end{itemize}
\end{Cor}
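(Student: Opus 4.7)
The strategy is to combine Theorem~\ref{main} and the earlier Proposition with the reality of $\Pid$ on $\textup{SO}(n)$. Since $\textup{SO}(n) \subset \textup{SU}(n)$ and $\Pid(U) \in \setR$ for $U \in \textup{SO}(n)$, one immediately has $\Pid(\textup{SO}(n)) \subset R_\gamma \cap \setR$. The bound $\Pid(U) \leq 1$ follows from $|U_{jj}| \leq 1$, while Theorem~\ref{main} applied with polar angle $\theta = \pi$ (for which $\alpha(\pi) = \pi$) says that the maximum of $-\Pid(U) = \RE(\ee^{-\ii \pi} \Pid(U))$ on the real slice of $\Pid(\textup{SU}(n))$ equals $|\gamma(\pi)| = (1 - 2/n)^n$. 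Combining these two bounds yields $\Pid(\textup{SO}(n)) \subset [-(1-2/n)^n, 1]$.

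For the reverse inclusion I would specialize the Step~8 family $A(\alpha,\omega)$ to $\alpha = \pi$. Since $v(\omega) \in \setR^n$, the matrix $A(\pi,\omega) = (I - 2 v(\omega) v(\omega)^T)\diag(-1,1,\ldots,1)$ is a real Householder reflection composed with a sign diagonal, hence lies in $\textup{SO}(n)$, and a direct computation gives
\begin{align*}
\Pid(A(\pi,\omega)) = \cos(2\omega)\left(1 - \tfrac{2 \sin^2 \omega}{n-1}\right)^{n-1},
\end{align*}
which is continuous in $\omega$, equals $1$ at $\omega = 0$, and equals $-(1-2/n)^n$ at $\omega = \arctan\sqrt{n-1}$. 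The intermediate value theorem together with the upper bound just established shows that the image of this one-parameter path is exactly $[-(1-2/n)^n, 1]$.

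For the extremal characterizations I would again reduce to the $\textup{SU}(n)$ results and then impose reality. If $\Pid(U) = 1$ then $|\Pid(U)| = 1$, so by the Proposition $U$ is diagonal; $U \in \textup{SO}(n)$ then forces $U_{jj} = \sigma_j \in \{-1,1\}$ with $\prod_j \sigma_j = \det U = 1$. If $\Pid(U) = -(1-2/n)^n = \gamma(\pi)$, then Theorem~\ref{main} with $\alpha = \pi$ writes $U = (I - 2 v v^\dag)\diag(\ee^{\ii \alpha_1},\ldots,\ee^{\ii \alpha_n})$ with $|v_k| = 1/\sqrt{n}$ and $\prod_k \ee^{\ii \alpha_k} = -1$. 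Reality of the diagonal entries $U_{kk} = (1 - 2/n) \ee^{\ii \alpha_k}$ forces $\ee^{\ii \alpha_k} = \sigma_k \in \{-1,1\}$, and then reality of $U_{jk} = -2 v_j \overline{v}_k \sigma_k$ for $j \neq k$ forces every $v_j \overline{v}_k$ to be real. Since $|v_k| = 1/\sqrt{n} > 0$, this means all $v_j$ share a common argument modulo $\pi$; absorbing that global unit phase yields $v = u \in \setR^n$ with $|u_k| = 1/\sqrt{n}$ and $v v^\dag = u u^T$, which is the claimed form.

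The main obstacle is this final step, namely translating the complex extremal form provided by Theorem~\ref{main} into the real form of the corollary. The argument is algebraically elementary, but it requires carefully exploiting $|v_k| > 0$ and the fact that the global unit phase of $v$ is irrelevant to $v v^\dag$, in order to conclude that $v$ may be taken in $\setR^n$.
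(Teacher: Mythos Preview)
Your proposal is correct and follows essentially the same route as the paper: the inclusion $\Pid(\textup{SO}(n)) \subset \Pid(\textup{SU}(n)) \cap \sR = [-(1-2/n)^n,1]$ via Theorem~\ref{main}, the converse inclusion via the one-parameter family $A(\pi,\omega)$ and the intermediate value theorem, and the extremal forms via formula~(\ref{main2}) with $\alpha=0$ and $\alpha=\pi$. The only difference is that the paper simply asserts that the extremal characterizations ``follow directly'' from~(\ref{main2}), whereas you actually carry out the reality argument (showing that $\ee^{\ii\alpha_k}\in\{-1,1\}$ and that $v$ may be taken real by absorbing a global phase); this is a welcome addition, though note that the step $U_{kk}=(1-2/n)\ee^{\ii\alpha_k}\in\sR \Rightarrow \ee^{\ii\alpha_k}\in\{-1,1\}$ uses $n\neq 2$, so the degenerate cases $n\leq 2$ should be checked directly.
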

\begin{proof}
To see that $\Pid(\textup{SO}(n)) \subset [-(1-2/n)^n,1]$, notice that by Theorem \ref{main}
\begin{align*}
\Pid(\textup{SO}(n)) \subset \Pid(\textup{SU}(n)) \cap \sR = [-(1-2/n)^n,1].
\end{align*}

To show the converse inclusion, consider the matrices $A(\pi,\omega)$ defined by (\ref{Amatrix}) for any $\omega \in [0,\arctan\sqrt{n-1}]$. It is easy to see that indeed $A(\pi,\omega) \in \textup{SO}(n)$ for any $\omega$. Furthermore, the continuous map $\Pid(A(\pi,\cdot)): [0,\arctan\sqrt{n-1}] \rightarrow \sR$ attains the value $1$ at $\omega = 0$ and the value $-(1-2/n)^n$ at $\omega = \arctan\sqrt{n-1}$, and so it attains all the intermediate values as well.

As for the form of $U \in \textup{SO}(n)$ extremizing the diagonal product, it follows directly from the second part of Theorem \ref{main}, in particular from formula (\ref{main2}) with $\alpha = 0$ and $\alpha = \pi$.
\end{proof}

\section*{Acknowledgements}
I am grateful to Shmuel Friedland and Karol {\.Z}yczkowski for encouraging me to pursue the above problem and for all their insightful comments. I also wish to thank Rafa{\l} Bistro\'n and Micha{\l} Eckstein for countless discussions. Last but not least, I want to express my gratitude to the anonymous user1551 on \url{math.stackexchange}, whose elaborate answer to the $\textup{SO}(n)$-version of the problem set me on track to find the presented proof.

\bibliographystyle{elsarticle-num}
\bibliography{diagonalproduct_bib}{}

\begin{thebibliography}{1}
\expandafter\ifx\csname url\endcsname\relax
  \def\url#1{\texttt{#1}}\fi
\expandafter\ifx\csname urlprefix\endcsname\relax\def\urlprefix{URL }\fi
\expandafter\ifx\csname href\endcsname\relax
  \def\href#1#2{#2} \def\path#1{#1}\fi

\bibitem{sinkhorn69}
R.~Sinkhorn, P.~Knopp, Problems involving diagonal products in nonnegative
  matrices, Trans. Amer. Math. Soc. 136 (1969) 67--75.
\newblock \href {http://dx.doi.org/10.2307/1994701}
  {\path{doi:10.2307/1994701}}.

\bibitem{engel73}
G.~M. Engel, H.~Schneider, Cyclic and diagonal products on a matrix, {L}inear
  {A}lgebra {A}ppl. 7~(4) (1973) 301--335.
\newblock \href {http://dx.doi.org/10.1016/S0024-3795(73)80003-5}
  {\path{doi:10.1016/S0024-3795(73)80003-5}}.

\bibitem{rozanski17}
M.~R\'{o}\.{z}a\'{n}ski, R.~Witu{\l}a, E.~Hetmaniok, More subtle versions of
  the {H}adamard inequality, {L}inear {A}lgebra {A}ppl. 532 (2017) 500--511.
\newblock \href {http://dx.doi.org/10.1016/j.laa.2017.07.003}
  {\path{doi:10.1016/j.laa.2017.07.003}}.

\bibitem{miranda92}
M.~E.~F. Miranda, On the product of the diagonal elements of {H}ermitian
  matrices, {L}inear {M}ultilinear {A}lgebra 31~(1-4) (1992) 225--234.
\newblock \href {http://dx.doi.org/10.1080/03081089208818135}
  {\path{doi:10.1080/03081089208818135}}.

\bibitem{bebiano93}
N.~Bebiano, C.-K. Li, J.~{a}o~da Provid\^{e}ncia, Product of diagonal elements
  of matrices, {L}inear {A}lgebra {A}ppl. 178 (1993) 185--200.
\newblock \href {http://dx.doi.org/10.1016/0024-3795(93)90340-T}
  {\path{doi:10.1016/0024-3795(93)90340-T}}.

\bibitem{kaiser06}
N.~Kaiser, Mean eigenvalues for simple, simply connected, compact {L}ie groups,
  J. Phys. A 39~(49) (2006) 15287.
\newblock \href {http://dx.doi.org/10.1088/0305-4470/39/49/013}
  {\path{doi:10.1088/0305-4470/39/49/013}}.

\end{thebibliography}

\end{document}